\documentclass{amsart}
\usepackage{amsfonts,amssymb,amscd,amsmath,enumerate,verbatim,calc}
\usepackage{xy}

\newcommand{\CM}{Cohen-Macaulay}

\newcommand{\wrt}{with respect to}

\newcommand{\n}{\mathfrak{n} }
\newcommand{\m}{\mathfrak{m} }
\newcommand{\M}{\mathfrak{M} }

\newcommand{\R}{\mathcal{R} }
\newcommand{\Z}{\mathbb{Z} }
\newcommand{\Fc}{\mathcal{F} }
\newcommand{\Gc}{\mathcal{G}}
\newcommand{\rt}{\rightarrow}

\newcommand{\ov}{\overline}

\newcommand{\height}{\operatorname{height}}
\newcommand{\grade}{\operatorname{grade}}
\newcommand{\depth}{\operatorname{depth}}

\theoremstyle{plain}

\newtheorem{theorem}{Theorem}[section]
\newtheorem{corollary}[theorem]{Corollary}
\newtheorem{lemma}[theorem]{Lemma}
\newtheorem{proposition}[theorem]{Proposition}

\theoremstyle{definition}

\newtheorem{remark}[theorem]{Remark}

\theoremstyle{remark}

\begin{document}

\title[Integral closure filtration]{On asymptotic depth of integral closure filtration and an application }
\author{Tony J. Puthenpurakal}
\date{\today}
\address{Department of Mathematics, Indian Institute of Technology Bombay, Powai, Mumbai 400 076, India}
\email{tputhen@math.iitb.ac.in}
\subjclass{Primary  13A30,  13D45 ; Secondary 13H10, 13H15}
\keywords{integral closure filtration, $\mathfrak{m}$-full ideals, asymptotic depth}

 \begin{abstract}
Let $(A,\m)$ be an analytically unramified formally equidimensional Noetherian local ring with $\depth A \geq 2$. Let $I$ be an $\m$-primary ideal and set $I^*$ to be the integral closure of $I$. Set $G^*(I) = \bigoplus_{n\geq 0} (I^n)^*/(I^{n+1})^*$ be the associated graded ring of the integral closure filtration of $I$. We prove that $\depth G^*(I^n) \geq 2$ for all $n \gg 0$. As an application we prove that if $A$ is also an excellent normal domain containing an algebraically closed field isomorphic to $A/\m$ then there exists $s_0$ such that for all $s \geq s_0$ and $J$ is an integrally closed ideal \emph{strictly} containing $(\m^s)^*$  then we have a strict inequality $\mu(J) < \mu((\m^s)^*)$ (here
$\mu(J)$ is the number of minimal generators of $J$). 
\end{abstract}
 \maketitle
\section{introduction}

\s \label{setup} \emph{Setup:} Throughout $(A,\m)$ is a Noetherian local ring and $I$ is an $\m$-primary ideal Throughout we consider \emph{multiplicative} $I$-stable filtration of ideals \\ $\Fc = \{I_n \}_{n \geq 0}$, i.e., we assume
\begin{enumerate}
\item
$I_0 = A$ and $I_{n+1} \subseteq I_n$ for all $n \geq 0$
\item
$I_1 \neq A$ and  $I \subseteq I_1$.
\item
$I_n I_m \subseteq I_{n+m}$ for all $n, m \geq 0$.
\item 
$I I_n = I_{n+1}$ for all $n \gg 0$
\end{enumerate}
Let $\R(I) = \bigoplus_{n \geq 0} I^n t^n$ be the Rees-algebra of $A$ \wrt \ $I$.
Set $\R(\Fc) = \bigoplus_{n \geq 0} I_n t^n$ be the Rees-algebra of $\Fc$.
By our assumption $\R(\Fc)$ is a finite extension of $\R(I)$ in $A[t]$.
Also set $G(\Fc) = \bigoplus_{n \geq 0} I_n/ I_{n+1}$ the associated graded ring of $\Fc$. 
If $\Gc = \{ (I_n)^* \}$ the integral closure filtration of $I$ then we set 
$G^*(I) = G(\Gc)$. We note that if $A$ is analytically unramified (i.e., $\widehat{A}$ is reduced) then the integral closure filtration of $I$ is a multiplicative $I$-stable filtration of ideals.

The following result is definitely known. We sketch a proof for the convenience of the reader.
\begin{proposition}
(with hypotheses as in \ref{setup}).  Further assume that $A$ is analytically unramified. We have $I_n \subseteq (I^n)^*$ for all $n \geq 1$.
\end{proposition}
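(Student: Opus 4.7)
The plan is to combine the finiteness of $\R(\Fc)$ over $\R(I)$ with the standard graded description of the integral closure of a Rees algebra inside the polynomial ring $A[t]$.

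First I would observe that $\R(\Fc)$ is a finite $\R(I)$-module. Condition (4) of the setup provides an $n_0$ with $II_n = I_{n+1}$ for every $n \geq n_0$, so $I_{n_0+k} = I^k I_{n_0}$ for all $k \geq 0$, and hence $\R(\Fc)$ is generated over $\R(I)$ by the finitely generated $A$-modules $A,\, I_1 t,\, \ldots,\, I_{n_0} t^{n_0}$. In particular every element of $\R(\Fc)$ is integral over $\R(I)$; note that the analytically unramified hypothesis is not needed for this step, it enters only to ensure that $\Gc$ itself is a multiplicative $I$-stable filtration.

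Next I would invoke the standard fact that the integral closure of $\R(I)$ inside the graded overring $A[t]$ is homogeneous and equal to $\bigoplus_{n \geq 0} (I^n)^* t^n$. This rests on two ingredients: (a) if $f \in A[t]$ is integral over the graded subring $\R(I)$ then every homogeneous component of $f$ is also integral over $\R(I)$ (the usual graded integral closure lemma, proved by the substitution $t \mapsto \lambda t$ and comparing degrees); and (b) a homogeneous $xt^n \in A[t]$ is integral over $\R(I)$ if and only if $x$ satisfies a relation $x^k + a_1 x^{k-1} + \cdots + a_k = 0$ with $a_i \in I^{ni}$, which is precisely the definition of $x \in (I^n)^*$.

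Combining the two steps: for any $x \in I_n$ the element $xt^n$ lies in $\R(\Fc)$, hence is integral over $\R(I)$ by the first step, and therefore $x \in (I^n)^*$ by (b). The only non-routine ingredient is the homogeneity statement in (a); this is well documented (e.g.\ in Swanson-Huneke, \emph{Integral Closure of Ideals, Rings, and Modules}) and will be the one spot to cite rather than reprove.
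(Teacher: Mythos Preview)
Your proposal is correct and follows exactly the same line as the paper's sketch: the paper simply asserts that the integral closure of $\R(I)$ in $A[t]$ is $\bigoplus_{n\ge 0}(I^n)^*t^n$ and that $\R(\Fc)$ is a finite extension of $\R(I)$ inside $A[t]$, whence the conclusion; you have merely unpacked each of these two assertions with the standard arguments. Your remark that the analytically unramified hypothesis is not actually used in the inclusion $I_n\subseteq (I^n)^*$ is also accurate.
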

\begin{proof}[Sketch of a proof]
The integral closure of $\R(I)$ in $A[t]$ is the Rees-algebra of the integral closure filtration of $I$. By our assumption $\R(\Fc)$ is a finite extension of $\R(I)$ in $A[t]$. The result follows.
\end{proof}
\s As $A$ is reduced we have that $\depth A > 0$. It follows that the ideals $(I^n)^*$ are all Ratliff-Rush. So $\depth G^*(I) > 0$. 
Recall a Noetherian local ring $A$ is called \textit{formally equidimensional}
if  $\dim \widehat{A}/P = \dim A$ for all minimal primes of $\widehat{A}$, the completion of $A$ \wrt \ $\m$. In the literature, the local rings
with this property are also called quasi-unmixed.
Our main result is
\begin{theorem}\label{main}
Let $(A,\m)$ be an analytically unramified formally equidimensional Noetherian local ring with $\depth A \geq 2$. Let $I$ be an $\m$-primary ideal and set $I^*$ to be the integral closure of $I$.  Then $\depth G^*(I^n) \geq 2$ for all $n \gg 0$.
\end{theorem}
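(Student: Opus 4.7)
The plan is to study the graded ring $S := \R^*(I) = \bigoplus_{n \geq 0} (I^n)^* t^n$ of the integral closure filtration and exploit its Veronese structure via graded local cohomology. Under our hypotheses, $S$ is the integral closure of $\R(I)$ inside $A[t]$, and because $A$ is analytically unramified and formally equidimensional, $S$ is a module-finite extension of $\R(I)$ (in particular Noetherian). Since $((I^n)^k)^* = (I^{nk})^*$ for all $n,k$, the filtration $\Gc^{(n)} := \{(I^{nk})^*\}_{k \geq 0}$ is precisely the integral closure filtration of $I^n$, and after regrading its Rees algebra coincides with the $n$-th Veronese subring $S^{(n)}$ of $S$.

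Next, I would pass to the extended Rees algebra $B := S[t^{-1}] \subseteq A[t,t^{-1}]$ and its Veronese $B^{(n)}$. Since $t^{-n} \in \M_B$ is a non-zerodivisor on $B^{(n)}$ with quotient $B^{(n)}/t^{-n}B^{(n)} = G^*(I^n)$, one has
\[
\depth G^*(I^n) \;=\; \depth_{\M_{B^{(n)}}}\! B^{(n)} - 1,
\]
so it suffices to prove $\depth B^{(n)} \geq 3$ for $n \gg 0$. The Veronese formula for graded local cohomology then gives
\[
H^i_{\M_{B^{(n)}}}\!\bigl(B^{(n)}\bigr) \;\cong\; \bigoplus_{k \in \Z} H^i_{\M_B}(B)_{nk},
\]
so the task reduces to showing $H^i_{\M_B}(B)_{nk} = 0$ for every $k \in \Z$, every $i \leq 2$, and all $n \gg 0$.

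The central step — and the main obstacle — is to show that the graded modules $H^i_{\M_B}(B)$ for $i \leq 2$ are supported in a bounded range of degrees. Once this is achieved, for $n$ exceeding the diameter of the support only the $k = 0$ component can survive in the Veronese formula; the vanishing of the $k = 0$ contribution for $i \leq 2$ is then controlled by a Sancho de Salas-type exact sequence relating $H^*_{\M_B}(B)_0$ to $H^*_\m(A)$, and is supplied by the hypothesis $\depth A \geq 2$ (which forces $H^i_\m(A) = 0$ for $i \leq 1$ and, combined with $(I^n)^*$ being $\m$-primary so that $A/(I^n)^*$ has vanishing higher local cohomology, the $i = 2$ contribution). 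Boundedness of the support relies on the three hypotheses working together: analytic unramifiedness makes $B$ Noetherian with Artinian local cohomology in each graded degree and bounded above in degree; formal equidimensionality controls associated primes and enables reduction-theoretic arguments for the integral closure filtration (in particular supplying superficial sequences for $\Gc$); and $\depth A \geq 2$ combined with the Ratliff--Rush observation ($\depth G^*(I) \geq 1$) anchors the low-dimensional analysis. This boundedness is of the same flavour as the author's earlier asymptotic depth results for Ratliff--Rush filtrations, but adapted here to the integral closure setting where formal equidimensionality is what secures the module-finiteness of $S$ over $\R(I)$ and hence the local-cohomological control.

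Granted the boundedness, the remaining ingredients — the Veronese formula, the extended Rees algebra trick, and the degree-zero Sancho de Salas analysis — assemble to give $\depth B^{(n)} \geq 3$ for $n \gg 0$, and hence $\depth G^*(I^n) \geq 2$ for all $n \gg 0$, as required.
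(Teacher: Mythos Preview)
Your framework is reasonable, but there is a real gap at exactly the point you flag as the ``central step'': the Veronese trick cannot by itself bound the support of $H^2_{\M_B}(B)$ in negative degrees. From the exact sequence $0 \to B(1) \xrightarrow{t^{-1}} B \to G^*(I) \to 0$ one obtains $H^2_{\M_B}(B)_{n+1} \cong H^2_{\M_B}(B)_n$ for every $n \leq -1$, so all negative graded pieces of $H^2_{\M_B}(B)$ are isomorphic to $H^2_{\M_B}(B)_{-1}$, and passing to any Veronese leaves them untouched. Thus ``bounded support'' for $i=2$ is equivalent to the single vanishing $H^2_{\M_B}(B)_{-1}=0$; via $0\to B\to A[t,t^{-1}]\to L^\Fc(-1)\to 0$ and the fact that $t^{-1}\in\M_B$ becomes a unit in $A[t,t^{-1}]$, this is the same as $H^1_\M(L^\Fc)_{-2}=0$. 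In the paper that vanishing (indeed, finite length of $H^1_\M(L^\Fc)$) is established only in Lemma~\ref{sun}, \emph{after} and \emph{using} Theorem~\ref{main}, so invoking it here would be circular. Your Sancho de Salas sketch for the $k=0$ piece is also not enough: $\depth A\geq 2$ kills $H^i_\m(A)$ only for $i\leq 1$, and nothing in your outline forces $H^2_{\M_B}(B)_0=0$.

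The paper closes this gap by a genuinely different mechanism. It first uses only the easy fact $H^1_\M(L^\Fc)_j=0$ for $j\gg 0$ together with the Veronese to arrange, for $r\gg 0$ and $\Gc$ the integral closure filtration of $K=I^r$, that $H^1(L^\Gc)_j=0$ for all $j\geq 0$. The decisive ingredient---absent from your outline---is Ciuperc\u{a}'s theorem (see \ref{AtoA'}(d) and Lemma~\ref{induct}): after a flat base change one can choose $y\in K$ superficial for both filtrations with $A/(y)$ analytically unramified \emph{and} $\overline{K^*}$ still integrally closed in $A/(y)$. Proposition~\ref{tech}(2) then forces $H^0(L^{\overline{\Gc}})_0=0$, hence $H^0(L^{\overline{\Gc}})=0$, hence $\depth G(\overline{\Gc})\geq 1$, and Sally descent yields $\depth G(\Gc)\geq 2$. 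Formal equidimensionality enters precisely through Ciuperc\u{a}'s result, not merely by ``supplying superficial sequences''; without the preservation of integral closedness under the generic hyperplane section, the degree-zero piece $H^0(L^{\overline{\Gc}})_0$ cannot be controlled and the argument collapses.
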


\begin{remark}
 We note that there exists $ n_0$ such that $(I^n)^*$ is a normal ideal for all $n \geq n_0$. If $A$ is \CM \ then it follows from a result of
 Huckaba and Huneke, \cite[3.1]{HH}, that $\depth G(((I^n)^*)^k) \geq 2$ for all $k \gg 0$. Even in this case it does not immediately follow 
 that $\depth G((I^n)^*) \geq 2$ for all $n \gg 0$.
\end{remark}

\textit{Application:}
Let $(A,\m)$ be a Noetherian local ring with $d=\dim A \ge 1$ and $I$ an
$\m$-primary ideal of $A$.  
The notion of $\m$-full ideals was introduced by 
D. Rees and J. Watanabe  (\cite{JW}) and they proved the \lq\lq Rees property" 
for $\m$-full ideals, namely, if $I$ is $\m$-full ideal and $J$ is an 
ideal containing $I$, then $\mu(J)\le \mu(I)$, where $\mu(I)=\ell_A(I/\m I)$ 
is the minimal number of generators of $I$. Also, they proved that integrally closed 
ideals are $\m$-full if $A$ is normal. 

\par
Suppose $\depth A >0$. 
Then $\widetilde{\m^n}$, 
the Ratliff-Rush closure of $\m^n$, is $\m$-full (\cite[Proposition 2.2]{AP}). 
Thus $\m^n$ is $\m$-full for sufficiently large $n$. 
\par 
Sometimes we need stronger property for $\mu(I)$ and we will call it 
\lq\lq Strong Rees property" (SRP for short). 

\par \vspace{2mm} \par \noindent 
{\bf Definition (Strong Rees Property).}
Let $I$ be an $\m$-primary ideal of $A$. 
Then we say that $I$ satisfies the {\it strong Rees property} 
if for every ideal $J \supsetneq I$, we have $\mu(J) < \mu(I)$.

In a recent paper the author with K. Watanabe and K. Yoshida proved the following result, \cite[3.2]{PWY}:
\begin{theorem}\label{PWY}
Let $(A,\m)$ be a Noetherian local ring. 
Assume that $\depth A \ge 2$ and 
$H_{\M}^1(G)$ has finite length, where $G=G(\m)= 
\oplus_{n\ge 0}\m^n/\m^{n+1}$.  
If $\m^{\ell}$ is Ratliff-Rush closed, then $\m^{\ell}$ has SRP.
\end{theorem}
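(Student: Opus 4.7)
\emph{Proof proposal.} The plan is to combine the Rees-Watanabe theorem on $\m$-full ideals with a careful analysis of the equality case in that inequality, using the cohomological hypothesis on $G = G(\m)$ to force a contradiction with the Ratliff-Rush closedness of $\m^{\ell}$.

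First, $\depth A \geq 2$ together with the Ratliff-Rush closedness of $\m^{\ell}$ gives, via \cite[Proposition 2.2]{AP}, that $\m^{\ell}$ is $\m$-full. The classical Rees-Watanabe theorem then yields $\mu(J) \leq \mu(\m^{\ell})$ for every ideal $J \supseteq \m^{\ell}$, so SRP reduces to excluding equality when $J \supsetneq \m^{\ell}$. By faithfully flat base change to $A(X) = A[X]_{\m A[X]}$ we may assume the residue field is infinite, since all hypotheses and the conclusion are preserved.

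Next, I would choose $x \in \m \setminus \m^{2}$ that is simultaneously superficial for $\m$ and an $\m$-full element of $\m^{\ell}$, i.e.\ $\m^{\ell+1} :_{A} x = \m^{\ell}$; such an $x$ exists generically since the residue field is infinite. Suppose for contradiction that some $J \supsetneq \m^{\ell}$ realises $\mu(J) = \mu(\m^{\ell})$. Multiplication by $x$ induces a natural injection $J/\m^{\ell} \hookrightarrow \m J /\m^{\ell+1}$, injective by $\m$-fullness combined with $\depth A \geq 1$. The target of the strategy is the inclusion $xJ \subseteq \m^{\ell+1}$; $\m$-fullness would then give $J \subseteq \m^{\ell+1} :_{A} x = \m^{\ell}$, contradicting $J \supsetneq \m^{\ell}$.

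The cohomological hypothesis $\ell_{A}(H^{1}_{\M}(G)) < \infty$ is precisely what is needed to extract the inclusion $xJ \subseteq \m^{\ell+1}$ from the numerical equality $\mu(J) = \mu(\m^{\ell})$. Morally, finite length of $H^{1}_{\M}(G)$ says that $G$ is $S_{2}$ in all high degrees, and through the Sancho de Salas / Valabrega-Valla exact sequences this translates into tight control of the colon $\m^{\ell} :_{A} x$ relative to $\m^{\ell}$. Combined with the Ratliff-Rush hypothesis $\widetilde{\m^{\ell}} = \m^{\ell}$, which forces the relevant graded component of $H^{0}_{\M}(G)$ to vanish, one obtains that the image of $J/\m^{\ell}$ inside $\m J/\m^{\ell+1}$ must in fact lie in the submodule $(\m^{\ell} \cap \m J)/\m^{\ell+1}$, which is equivalent to $xJ \subseteq \m^{\ell+1}$.

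The main obstacle is making this last argument precise: extracting from the finite-length hypothesis on $H^{1}_{\M}(G)$ together with $\widetilde{\m^{\ell}} = \m^{\ell}$ the exact structural conclusion $xJ \subseteq \m^{\ell+1}$. The cleanest route is to work in the graded module $G$, express the image of $J/\m^{\ell}$ in $\m J /\m^{\ell+1}$ through a suitable graded component, and use the Sancho de Salas sequence relating $H^{i}_{\M}(G)$ to $H^{i}_{\m}(A)$ to identify the discrepancy between $\m^{\ell}$ and $\m^{\ell+1} :_{A} x$ with a quotient of $H^{1}_{\M}(G)$ that is annihilated by the Ratliff-Rush hypothesis. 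Care is needed to choose $x$ compatibly so that the base-change analysis on $A/(x)$ is both clean and preserves enough structure to read off the desired inclusion.
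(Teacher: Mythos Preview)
This theorem is not proved in the present paper; it is quoted from \cite[3.2]{PWY}. The paper does, however, state that its proof of Theorem~\ref{main-app} in Section~5 is ``quite similar in spirit'' to the proof of Theorem~\ref{PWY}, so one can infer the intended method from that section. Comparing your proposal against that template, there is a genuine gap.

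Your entire argument hinges on establishing the inclusion $xJ \subseteq \m^{\ell+1}$ from the numerical equality $\mu(J)=\mu(\m^\ell)$, and you explicitly flag this as ``the main obstacle''. The paragraph meant to carry this step is not an argument: it invokes the Sancho de Salas sequence and a vague $S_2$-in-high-degrees heuristic, but never produces a map or an exact sequence in which the finiteness of $H^1_\M(G)$ forces the image of $J/\m^\ell$ in $\m J/\m^{\ell+1}$ to land in $(\m^\ell\cap\m J)/\m^{\ell+1}$. In fact, the injectivity you already established for $J/\m^\ell \to \m J/\m^{\ell+1}$ shows that $xJ\subseteq\m^{\ell+1}$ is \emph{equivalent} to $J\subseteq\m^\ell$; you are thus trying to prove the conclusion directly, and nothing in your sketch explains how the finite-length hypothesis intervenes to do so. The Ratliff--Rush hypothesis controls a piece of $H^0_\M(G)$, not the relevant piece of $H^1_\M(G)$, so the claimed ``annihilation'' does not follow.

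The method one reads off from Section~5 is structurally different. After reducing to $\lambda(J/\m^\ell)=1$, one writes the four-term exact sequence
\[
0 \to \m J/\m^{\ell+1} \to \m^\ell/\m^{\ell+1} \to J/\m J \to J/\m^\ell \to 0,
\]
so that $\mu(J)=\mu(\m^\ell)$ becomes $c:=\lambda(\m J/\m^{\ell+1})=1$. One then builds the finitely generated graded $\R(\m)$-module $C=\bigoplus_{i\ge 0}\m^iJ/\m^{\ell+i}$, fits it into $0\to C\to D\to V\to 0$ with $D$ a shift of $L=\bigoplus A/\m^{n+1}$ and $V=\bigoplus A/\m^iJ$, and shows that $c=1$ forces $\dim_{\R(\m)}C=1$, whence $H^1_\M(C)$ is \emph{not} of finite length. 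But the long exact sequence together with $\ell(H^1_\M(L))<\infty$ (which is equivalent, via $0\to G\to L\to L(-1)\to 0$, to the hypothesis $\ell(H^1_\M(G))<\infty$) and $\ell(H^0_\M(V))<\infty$ forces $\ell(H^1_\M(C))<\infty$, a contradiction. The cohomological hypothesis is used globally on a graded module over the Rees algebra, not degree-by-degree on colons in $A$; your proposal does not set up this machinery and does not supply a substitute for it.
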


For integrally closed ideals we have the following  weaker form of SRP.
\par \vspace{2mm} \par \noindent 
{\bf Definition ($SRP^*$ for integrally closed ideals ).}
Let $I$ be an $\m$-primary  integrally closed ideal of $A$. 
Then we say that $I$ satisfies the {\it *-strong Rees property for integrally closed ideals} 
if for every ideal $J \supsetneq I$ with $J$ \textit{integrally closed}, we have $\mu(J) < \mu(I)$.

Our  application of Theorem \ref{main} is the following:
\begin{theorem}\label{main-app}
Let $(A,\m)$ be an excellent normal domain  of dimension $d \geq 2$ with algebraically closed residue field. 
Then there exists $s_0$ such that if  $j \geq s_0$ and
 $I$ is an integrally closed ideal with  $I \varsupsetneq (\m^j)^* $;
 then $\mu(I) < \mu((\m^j)^*)$.
\end{theorem}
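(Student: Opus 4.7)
The plan is to combine Theorem~\ref{main} applied to $I = \m$ with an adaptation of the argument behind Theorem~\ref{PWY}, run using the integral closure filtration of $(\m^j)^*$ in place of the $\m$-adic filtration and with test ideals restricted to integrally closed ones.

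First I would check the hypotheses of Theorem~\ref{main}. Since $A$ is an excellent normal domain with $\dim A \geq 2$, $A$ is analytically unramified, formally equidimensional (a universally catenary local domain is always formally equidimensional), and satisfies $\depth A \geq 2$ by Serre's condition $S_2$. Theorem~\ref{main} applied with $I = \m$ then produces $s_0$ with $\depth G^*(\m^n) \geq 2$ for all $n \geq s_0$. I would fix $j \geq s_0$, set $L = (\m^j)^*$, and consider the integral closure filtration $\Gc = \{(L^r)^*\}_{r \geq 0}$, whose associated graded ring $G(\Gc) = G^*(\m^j)$ has depth at least $2$.

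Let $I$ be an integrally closed ideal with $I \supsetneq L$. Normality of $A$ together with the Rees--Watanabe theorem shows that $L$ is $\m$-full, so the Rees property yields $\mu(I) \leq \mu(L)$. The task is to rule out equality. Since the residue field is algebraically closed, a genericity argument produces $x \in L$ which is simultaneously $A$-regular, superficial for $\Gc$ with initial form $x^* \in G(\Gc)_1$ extending to a $G(\Gc)$-regular sequence of length $2$ (using $\depth G(\Gc) \geq 2$), and satisfies both $\m$-fullness colons $\m L : x = L$ and $\m I : x = I$ (the latter using that $I$ is also integrally closed, hence $\m$-full). The regular-sequence property on $G(\Gc)$ translates into the colon identities $(L^{r+1})^* : x = (L^r)^*$ for every $r \geq 0$. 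Assuming for contradiction that $\mu(I) = \mu(L)$, an adaptation of the Rees-type computation---with the Ratliff--Rush identity used in Theorem~\ref{PWY} replaced by the above colon identities in the integral closure filtration, and integral closedness of $I$ replacing Ratliff--Rush closedness---should force $I \subseteq L^* = L$, contradicting $I \supsetneq L$.

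The main technical obstacle I foresee is transposing the depth-driven step of the proof of Theorem~\ref{PWY} to the integral closure setting: showing that $\mu(I) = \mu(L)$ together with the colon identities and integral closedness of $I$ collapses $I$ into $L$. Once that analogue is available, the remaining ingredients (the genericity of $x$, the application of Nakayama's lemma to the $\m$-primary module $I/L$, and the concluding contradiction) should follow along the lines of the original PWY argument.
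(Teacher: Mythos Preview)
Your proposal has a genuine gap in the choice of the element $x$. You ask for $x \in L = (\m^j)^*$ that is simultaneously $\Gc$-superficial (which forces $x \in L$) and an $\m$-fullness witness, i.e.\ $\m L : x = L$. These requirements live at incompatible scales: the $\m$-fullness witness for $L$ must be a general element of $\m$, in particular $x \notin \m^2$, whereas any $x \in L \subseteq \m^j$ lies in $\m^2$ once $j \geq 2$. Concretely, for $A = k[[u,v]]$ and $L = \m^2$, every $x \in L$ satisfies $\m x \subseteq \m^3 = \m L$, so $\m L : x \supseteq \m \supsetneq L$ and the colon identity you need fails. More broadly, your filtration $\Gc = \{(\m^{jr})^*\}_r$ jumps in steps of $\m^j$, while $\mu(I) = \ell(I/\m I)$ and the Rees inequality are governed by $\m$; the depth of $G^*(\m^j)$ does not by itself control the intermediate pieces $(\m^{j+1})^*, (\m^{j+2})^*, \ldots$ that actually enter the comparison of $\mu(I)$ with $\mu(L)$.

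The paper's proof stays at the $\m$-level throughout. The relevant filtration is $\Fc = \{(\m^n)^*\}_{n \geq 0}$, and the crucial input is not Theorem~\ref{main} applied at level $j$ but its consequence Lemma~\ref{sun}: $H^1_\M(L^\Fc)$ has finite length. The argument first reduces, via Watanabe's chains of integrally closed ideals, to $\ell(I/(\m^j)^*) = 1$; an elementary length count then shows that $\mu(I) < \mu((\m^j)^*)$ is equivalent to $c := \ell\bigl(\m I/(\m^{j+1})^*\bigr) \geq 2$ (here one uses $\m(\m^j)^* = (\m^{j+1})^*$, which is how $s_0$ is actually chosen). Assuming $c = 1$, one builds explicit $\R(\m)$-modules $C \hookrightarrow D \twoheadrightarrow V$ with $D$ a shifted truncation of $L^\Fc$ and $V = \bigoplus_i A/\m^i I$; the assumption forces $\dim_{\R(\m)} C = 1$, so $H^1_\M(C)$ cannot have finite length, while the long exact sequence together with Lemma~\ref{sun} says it must. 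If you want a PWY-style argument instead, the natural fix is to take $x \in \m$ superficial for $\Fc$ and to replace the hypothesis ``$H^1_\M(G(\m))$ has finite length'' of Theorem~\ref{PWY} by ``$H^1_\M(G^*(\m))$ has finite length''; but establishing the latter again requires Lemma~\ref{sun}, not merely $\depth G^*(\m^j) \geq 2$.
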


\emph{Technique used to prove our results:}\\
The main technique for this paper is to consider $L^{\Fc} =  \bigoplus_{n \geq 0} A/I_{n+1}$. This is a module over $\R(I)$, see \ref{basic}. When $\Fc$ is the $I$-adic filtration then this technique was developed in \cite{P1},\cite{P2}.
Although this module is not finitely generated as a $\R$-module it has the following good properties:
\begin{proposition}\label{tech}
 Assume $A$ is an analytically unramified local ring. Let $\Fc$ be a multiplicative $I$-stable filtration over an $\m$-primary
 ideal $I$. Let $\M$ be the maximal homogeneous ideal of $\R(I)$. Then
 \begin{enumerate}[\rm (1)]
  \item $H^0_\M(L^\Fc)_n = 0$ for all $n \gg 0$.
  \item
  Set $H^0_\M(L^\Fc)_n = I_{n+1}^\circ/I^{n+1}$. Then $I_{n+1}^\circ \subseteq (I_{n+1})^*$.
  \item If $\depth A \geq 2$ then we have
  \begin{enumerate}[\rm (a)]
   \item $H^1_\M(L^\Fc)_n = 0$ for all $n \gg 0$.
  \item $\ell (H^1_\M(L^\Fc)_n) < \infty $ for all $n \in \Z$.
  \item For all $n < 0$ we have
  $\ell (H^1_\M(L^\Fc)_{n-1})  \leq \ell (H^1_\M(L^\Fc)_{n})$.
  \end{enumerate}
\end{enumerate}
\end{proposition}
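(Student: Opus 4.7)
\emph{Proof plan.}
The plan is to pivot everything off the short exact sequence of graded $\R(I)$-modules
\[
0 \longrightarrow \R(\Fc) \longrightarrow A[t] \longrightarrow L^{\Fc}(-1) \longrightarrow 0,
\]
in which the first map is inclusion and the quotient is identified with $L^{\Fc}(-1)$ by reading off $A/I_n$ in degree $n$. The crucial preliminary step is to compute $H^i_{\M}(A[t])$: because $I\subseteq\m$ we already have $\M A[t]=\m A[t]$ inside $A[t]$, so flat base change gives $H^i_{\M}(A[t])\cong H^i_{\m}(A)\otimes_A A[t]$, which vanishes in every degree $i<\depth A$. In particular $H^0_{\M}(A[t])=0$ (since $A$ is reduced), and $H^1_{\M}(A[t])=0$ when $\depth A\geq 2$. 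On the other side, $\R(\Fc)$ is a finitely generated $\R(I)$-module by our standing hypothesis, so each $H^i_{\M}(\R(\Fc))$ is graded with finite-length components and vanishes in all sufficiently large degrees. Feeding these into the long exact sequence of local cohomology I obtain an injection $H^0_{\M}(L^{\Fc}(-1))\hookrightarrow H^1_{\M}(\R(\Fc))$, yielding (1); and when $\depth A\geq 2$ a further injection $H^1_{\M}(L^{\Fc}(-1))\hookrightarrow H^2_{\M}(\R(\Fc))$, yielding (3)(a) and (3)(b).

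For (2) I would unravel the $\M$-torsion condition. A class $a+I_{n+1}\in L^{\Fc}_n$ is $\M$-torsion if and only if $I^k a\subseteq I_{n+k+1}$ for some $k\geq 0$; the $\m$-part is automatic because $I$ is $\m$-primary and hence $\m^N\subseteq I_{n+1}$ for large $N$. This identifies $H^0_{\M}(L^{\Fc})_n$ with $I_{n+1}^\circ/I_{n+1}$. For the containment $I_{n+1}^\circ\subseteq (I_{n+1})^*$ I would combine the earlier proposition $I_m\subseteq (I^m)^*$ with the Rees valuation criterion in the analytically unramified setting: from $aI^k\subseteq I_{n+k+1}\subseteq (I^{n+k+1})^*$, every Rees valuation $v$ of $I$ satisfies $v(a)+kv(I)\geq (n+k+1)v(I)$, hence $v(a)\geq (n+1)v(I)$, so $a\in (I^{n+1})^*\subseteq (I_{n+1})^*$, the last step because $I^{n+1}\subseteq I_{n+1}$.

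The delicate step is (3)(c), which I would handle by a superficial-element argument. Enlarging the residue field by a faithfully flat extension if necessary, I pick $x\in I$ that is a non-zero-divisor on $A$ and is $\Fc$-superficial, so that $(I_{n+1}:x)=I_n$ for all $n\gg 0$. Multiplication by $xt\in\R(I)_1$ gives a map of graded modules $L^{\Fc}(-1)\xrightarrow{\,xt\,}L^{\Fc}$ whose kernel $K$ has $K_n=(I_{n+1}:x)/I_n$ and whose cokernel $C$ has $C_n=A/(I_{n+1}+xA)$. By construction $K$ has finite length, so $H^i_{\M}(K)=0$ for $i\geq 1$; meanwhile $C_n=0$ for $n<0$. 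Splitting the four-term exact sequence into two short exact sequences and applying local cohomology, the vanishing of $H^1_{\M}(K)$ gives an injection $H^1_{\M}(L^{\Fc}(-1))_n\hookrightarrow H^1_{\M}(\mathrm{image}(xt))_n$, and for $n<0$ the vanishing of $H^0_{\M}(C)_n$ gives a further injection $H^1_{\M}(\mathrm{image}(xt))_n\hookrightarrow H^1_{\M}(L^{\Fc})_n$. Composing, $H^1_{\M}(L^{\Fc})_{n-1}\hookrightarrow H^1_{\M}(L^{\Fc})_n$ for $n<0$, which is exactly (3)(c). The main obstacle is producing the superficial element $x$ cleanly, which is why the standard faithfully flat extension enlarging the residue field may be needed.
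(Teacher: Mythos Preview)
Your proof is correct, and for parts (1), (3)(a) and (3)(c) it matches the paper's argument essentially verbatim: the paper also derives (1) and (3)(a) from the injection $H^i_{\M}(L^{\Fc}(-1))\hookrightarrow H^{i+1}_{\M}(\R(\Fc))$ coming from the defining short exact sequence, and handles (3)(c) by the superficial element $x$, the four-term sequence for multiplication by $xt$, and the vanishing of the cokernel in negative degrees.

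Where you diverge is in (2) and (3)(b). For (3)(b) the paper does \emph{not} use your embedding $H^1_{\M}(L^{\Fc}(-1))\hookrightarrow H^2_{\M}(\R(\Fc))$; instead it runs a descending induction on the exact sequence $H^0_{\M}(L^{\overline{\Fc}})_n\to H^1_{\M}(L^{\Fc})_{n-1}\to H^1_{\M}(L^{\Fc})_n$ produced by the superficial element, starting from the vanishing for $n\gg 0$. Your route is shorter and avoids the induction entirely, since the graded pieces of $H^2_{\M}(\R(\Fc))$ already have finite length. For (2) the paper takes a more structural path: it compares $L^{\Fc}$ with $L^*_I=\bigoplus A/(I^{n+1})^*$ via the short exact sequence $0\to W(1)\to L^{\Fc}\to L^*_I\to 0$ with $W=\R^*(I)/\R(\Fc)$, proves separately that $H^0_{\M}(L^*_I)=0$ (using that integrally closed ideals are Ratliff--Rush and a $*$-Artinian trick), and then reads off $H^0_{\M}(L^{\Fc})_n\subseteq W(1)_n=(I_{n+1})^*/I_{n+1}$. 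Your valuation argument is more elementary and self-contained; the paper's argument, on the other hand, isolates the vanishing $H^0_{\M}(L^*_I)=0$ as a reusable fact and avoids invoking Rees valuations.
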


We now describe in brief the contents of this paper. In section two we describe a few preliminaries that we need.  In section three we describe our construction $L^\Fc$ and prove Proposition \ref{tech}. In section four we prove Theorem \ref{main}. Finally in section five we prove Theorem \ref{main-app}.

\section{preliminaries}
In this section we collect a few preliminaries that we need.
\s \emph{Some properties of multiplicative $I$-stable filtrations:}
\begin{enumerate}
\item
Let $\Fc = \{ I_n \}_{n \geq 0}$ be a multiplicative $I$-stable filtration and let $x \in I$. Set $B = A/(x)$, $\ov{I} = I/(x)$ and let
$\ov{\Fc} = \{ (I_n + (x))/(x)\}_{n \geq 0}$ be the quotient filtration of $\Fc$. Then $\ov{\Fc}$ is a multiplicative $\ov{I}$-stable filtration.
\item
If the residue field $k$ is infinite then we may choose $x$ which is both $I$ and $\Fc$-superficial.
\item
Set $\Fc^{<l>} = \{ I_{nl} \}_{n \geq 0}$. Then $\Fc^{<l>}$ is a multiplicative 
$I^l$-stable filtration. 
\end{enumerate}

\s\label{AtoA'} \textbf{Flat Base Change:} In our paper we do many flat changes of rings.
 The general set up we consider is
as follows:

 Let $\phi \colon (A,\m) \rt (A',\m')$ be a flat local ring homomorphism
 with $\m A' = \m'$. Set $I' = I A'$ and if
 $N$ is an $A$-module set $N' = N\otimes A'$. Set $k = A/\m$ and $k' = A'/\m'$.
 Let $\Fc = \{I_n \}_{n \geq 0}$ be a multiplicative $I$-stable filtration. Then $\Fc^\prime = \{I_n^\prime \}_{n \geq 0}$ is a multiplicative $I^\prime$-stable filtration.

 \textbf{Properties preserved during our flat base-changes:}

\begin{enumerate}[\rm (1)]
\item
$\ell_A(N) = \ell_{A'}(N')$.
\item
$\dim M = \dim M'$ and  $\grade(K,M) = \grade(KA',M')$ for any ideal $K$ of $A$.
\item
$\depth G(\Fc) = \depth G(\Fc^\prime)$.
\end{enumerate}

\textbf{Specific flat Base-changes:}

\begin{enumerate}[\rm (a)]
\item
$A' = A[X]_S$ where $S =  A[X]\setminus \m A[X]$.
The maximal ideal of $A'$ is $\n = \m A'$.
The residue
field of $A'$ is $k' = k(X)$. Notice that $k'$ is infinite.
\item
$A' =  \widehat{A}$  the completion of $A$ \wrt \ the maximal ideal.
\item
$A' = A[X_1,\ldots,X_n]_S$ where $S =  A[X_1,\ldots,X_n]\setminus \m A[X_1,\ldots,X_n]$.
The maximal ideal of $A'$ is $\n = \m A'$.
The residue
field of $A'$ is $l = k(X_1,\ldots,X_n)$. Notice that if $I$ is integrally closed then $I'$ is 
also integrally closed.
\item
When $\depth A \geq 2$ and $A$ is formally equidimensional, then
Ciuperc\u{a}   \cite[Corollary 2]{C} shows that in $A'$   (for $n = \mu(I)$ in (ii)) 
there exists a superficial element $y \in I'$ such that 
the $A'/(y)$ ideal $J = I'/(y)$ is also integrally closed (if $I$ is integrally closed).  We call $A'$ general extension 
of $A$ \wrt \ $I$. Also if $I = (a_1,\ldots, a_n)$ then we can choose $y$ to be $\sum_{i=1}^{n} a_i X_i$. We call the latter 
a generic element of $I'$.
We note that  Ciuperc\u{a} observes that $y$ is also superficial for the integral closure filtration of $I$, see \cite[Section 2.5]{C}.
\end{enumerate}

We will need the following result in the proof of Theorem \ref{main}. It is definitely known to experts. We give a proof 
for the convenience of the reader
\begin{lemma}\label{induct}
 Let $(A,\m)$ be an analytically unramified, formally equidimensional Noetherian local ring with $\depth A \geq 2$. Let
 $K$ be an $\m$-primaryideal of $A$. Then
 there exists a flat local homomorphism $\phi \colon A \rt A^\prime$ with $\m A^\prime = \m^\prime$ and that there exists $y \in K^\prime$ such that
 \begin{enumerate}[\rm (1)]
  \item the ring $A^\prime/(y)$ is analytically unramified.
  \item $y$ is superficial for both the $K^\prime$-adic filtration and the integral closure filtration of $K^\prime$.
 \item the ideal $(K^\prime)^* A^\prime/(y)$ is integrally closed in $A^\prime/(y)$.
 \item $ (K^n)^*A^\prime = ((K^\prime)^n)^*$ for all $n \geq 1$.
 \end{enumerate}
\end{lemma}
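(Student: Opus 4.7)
The plan is to produce $A'$ via the generic polynomial-ring-localization flat extension and to choose $y$ to be the generic element, then verify the four claims by combining the base-change properties listed in (a)--(d) of the preliminaries with Ciuperc\u{a}'s theorem. Fix a generating set $K = (a_1, \ldots, a_n)$ and form
\[
A' = A[X_1, \ldots, X_n]_S, \qquad S = A[X_1,\ldots,X_n] \setminus \m A[X_1,\ldots,X_n],
\]
as in (c), so that $A \to A'$ is a faithfully flat local homomorphism with $\m A' = \m'$ and infinite residue field $k(X_1,\ldots,X_n)$. Standard flat-base-change results transfer the hypotheses ``analytically unramified,'' ``formally equidimensional,'' and ``$\depth \geq 2$'' to $A'$. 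Define the generic element $y = \sum_{i=1}^n a_i X_i \in KA' = K'$.

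Property (4) is an immediate consequence of (c): since $(K^n)^*$ is integrally closed in $A$, the ideal $(K^n)^* A'$ is integrally closed in $A'$, and the containment $(K')^n \subseteq (K^n)^* A'$ forces $((K')^n)^* \subseteq (K^n)^* A'$; the reverse inclusion follows because integral dependence is preserved under base change. Property (2) is the content of Ciuperc\u{a}'s theorem cited in (d), together with the remark there (from \cite[Section 2.5]{C}) that the same generic $y$ is also superficial for the integral closure filtration $\{((K')^n)^*\}$. Property (1), analytic unramifiedness of $A'/(y)$, is part of the same generic-element package: formal equidimensionality of $A'/(y)$ (Ratliff's theorem, using that $y$ is a parameter) combined with the genericity of $y$ forces reducedness of the completion.

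The main obstacle is property (3), that $(K')^* A'/(y)$ is integrally closed in $A'/(y)$. Ciuperc\u{a}'s theorem as stated in (d) yields this conclusion only when the input ideal is already integrally closed and $y$ is built from its generators, whereas $K$ itself need not be integrally closed. One resolution is to enlarge the extension, adjoining additional variables $Y_1, \ldots, Y_m$ corresponding to a minimal generating set of $K^*$, and exploit the fact that $K^* = (a_1, \ldots, a_n, b_{n+1}, \ldots, b_m)$ can be arranged to contain the $a_i$; a careful choice of generic element lying in $K'$ but also generic enough with respect to the $K^*$-adic structure then makes $(K^*)' A'/(y) = (K')^* A'/(y)$ integrally closed by applying Ciuperc\u{a}'s theorem to $K^*$. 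Alternatively, and more cleanly, one invokes the stronger form of Ciuperc\u{a}'s argument present in \cite[Section 2.5]{C}, which shows that the generic element $y = \sum a_i X_i$ simultaneously controls the $K'$-adic filtration and its integral closure filtration, and in particular makes $(K')^* A'/(y)$ integrally closed in $A'/(y)$. Assembling (1)--(4) then completes the proof.
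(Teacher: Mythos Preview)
Your treatment of (2)--(4) is close to the paper's: the author also forms the generic extension and takes $y=\sum a_iX_i$, then defers (2) and (3) to Ciuperc\u{a} (your ``second alternative'' for (3) is exactly what the paper invokes, so the worry about $K$ not being integrally closed and the proposed enlargement by extra variables are unnecessary detours). Property (4) is, as you say, immediate from base change.

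There is, however, a genuine gap in your handling of (1), and it is tied to a step you omitted. The paper does \emph{not} take $A'=A[X_1,\ldots,X_n]_S$; it first completes $A$ and sets $A'=\widehat{A}[X_1,\ldots,X_m]_{\widehat{\m}\widehat{A}[X_1,\ldots,X_m]}$. The reason is that the argument for (1) proceeds in two stages: first show $A'/(y)$ is reduced, then conclude it is analytically unramified. The second stage requires excellence of $A'/(y)$, and excellence is guaranteed precisely because $\widehat{A}$ is complete (hence excellent), so its finitely generated algebras and their localizations and quotients are excellent. Your $A'$ need not be excellent, since $A$ is only assumed Noetherian; so even if you proved $A'/(y)$ reduced, you would have no route to analytic unramifiedness.

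Moreover, your justification for reducedness --- ``formal equidimensionality combined with genericity forces reducedness of the completion'' --- is not an argument. The paper proves reducedness of $A'/(y)$ concretely: for a minimal prime $P$ of $(y)$ in $A'$, height considerations and $\depth A'\geq 2$ force some $a_i$ (say $a_m$) to lie outside the contracted prime $Q$ in $T=\widehat{A}[X_1,\ldots,X_m]$; then in $(T/(y))_Q$ one can solve $X_m$ in terms of $X_1,\ldots,X_{m-1}$, so $(T/(y))_Q$ is a localization of $\widehat{A}[X_1,\ldots,X_{m-1}]$, which is reduced since $\widehat{A}$ is. This variable-elimination trick is the missing idea in your sketch.
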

\begin{proof}
 We construct $A^\prime$ in two steps. First we complete $A$. Notice by assumption $\widehat{A}$ is equidimensional and reduced.
 Let $K = (a_1,\ldots, a_m)$. Then $K\widehat{A}$ is generated by images of $a_i$ in $\widehat{A}$. Set $A^\prime
 = \widehat{A}[X_1,\ldots,X_m]_{\widehat{\m}\widehat{A}[X_1,\ldots, X_m]}$ and $y = \sum_{i =1}^{m} a_iX_i$.
 Then as noted in \ref{AtoA'}(d)  we get that (2), (3) hold true. Also trivially (4) holds. 
 
 (1) Set $T = \widehat{A}[X_1,\ldots, X_m]$. 
 Let $P$ be a minimal prime ideal of $(y)$ in $A^\prime$. Then $\height P = 1$. There exists a prime ideal $Q$ of $T$ containing
 $(y)$ such that $\height Q = 1$ and $QA^\prime = P$. As $\depth A^\prime  \geq 2$ we get that there exists $a_i$ such that 
 $a_i \notin Q$. Say $a_m \notin Q$. Then $(T/(y))_Q$  is a localization of $\widehat{A}[X_1,\ldots, X_{m-1}]$. The latter ring is reduced.
 So $(T/(y))_Q$ is reduced. As $(A^\prime/(y))_P$ is a localization of $(T/(y))_Q$ it is also reduced. Thus $A^\prime/(y)$ is reduced.
 As it is also excellent we get that $A^\prime/(y)$ is analytically unramified.
\end{proof}

\section{$L^\Fc$}
In this section we define $L^\Fc$ and study few of its properties. We also prove 
 prove Proposition \ref{tech} which is the most important technical result of this paper.
 
\s \label{basic} Let $(A,\m)$ be a Noetherian local ring and let $I$ be an $\m$-primary ideal of $A$. Let $\Fc = \{ I_n \}_{n \geq 0}$ be an $I$-stable multiplicative  filtration of $A$. Set $L^\Fc = \bigoplus_{n \geq 0} A/I_{n+1}$.  Note that we have a short exact sequence of $\R(I)$-modules
\begin{equation}\label{defn}
0 \rt \R(\Fc) \rt A[t] \rt L^\Fc(-1) \rt 0.
\end{equation}
It follows that $L^\Fc$ is a $\R(I)$-module. Note that it is not finitely generated $\R(I)$-module.

Throughout we take local cohomology of $\R(I)$-modules \wrt \  $\M = \m \oplus \R(I)_+$, the maximal homogeneous ideal of $\R(I)$. Recall a graded $\R(I)$-module $V$ is said to be $*$-Artinian if every descending chain of graded submodules of $V$ stablizes. For instance if $E$ is a finitely generated graded $\R(I)$-module then 
for all $i \geq 0$ the $\R(I)$-modules  $H^i_\M(E)$ is $*$-Artinian.

\s \label{Artin-vanish} We will use the following well-known result regarding *-Artinian modules quite often:

Let $V$ be a *-Artinian $\R(I)$-module. Then
\begin{enumerate}[\rm (a)]
\item
$V_n = 0$ for all $n \gg 0$
\item
If $\psi \colon V(-1) \rt V$ is a monomorphism then $V = 0$.
\item
If $\phi \colon V \rt V(-1)$ is a monomorphism then $V = 0$.
\end{enumerate}

We begin with the following easy result
\begin{lemma}\label{a-vanish}
Let $\depth A = c \geq 1$. Then for $i = 0,\ldots, c-1$ we have $H^i_\M(L^\Fc)$ is $*$-Artinian. In particular
$H^i_\M(L^\Fc)_n = 0$ for $n \gg 0$.
\end{lemma}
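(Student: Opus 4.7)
The plan is to derive the lemma from the defining short exact sequence (\ref{defn})
$$0 \to \R(\Fc) \to A[t] \to L^\Fc(-1) \to 0$$
of graded $\R(I)$-modules by taking the associated long exact sequence of local cohomology with respect to $\M$ and sandwiching $H^i_\M(L^\Fc)(-1)$ between a zero term and a $*$-Artinian term.

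First I would note that because $\Fc$ is $I$-stable, $\R(\Fc)$ is a finitely generated graded $\R(I)$-module (this is condition (4) in the setup). The Rees algebra $\R(I)$ is a Noetherian graded $A$-algebra, so by the standard fact recalled in \ref{basic}, $H^j_\M(\R(\Fc))$ is $*$-Artinian for every $j \geq 0$.

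The main computation is to show that $\depth_\M A[t] \geq c$, so that $H^i_\M(A[t]) = 0$ for $0 \leq i \leq c-1$. The key observation is that $A[t] = \bigoplus_{n \geq 0} A\, t^n$ is free as an $A$-module, so any $A$-regular sequence is also $A[t]$-regular. Choosing an $A$-regular sequence $x_1, \ldots, x_c \in \m$, these are degree-zero homogeneous elements of $\R(I)$ lying in $\M = \m \oplus \R(I)_+$, and they form an $A[t]$-regular sequence. Hence $\depth_\M A[t] \geq c$ and $H^i_\M(A[t]) = 0$ for $i < c$.

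Putting the two ingredients together, for each $i$ with $0 \leq i \leq c-1$ the long exact sequence gives
$$H^i_\M(A[t]) \xar H^i_\M(L^\Fc)(-1) \xar H^{i+1}_\M(\R(\Fc)),$$
with the leftmost term zero. Thus $H^i_\M(L^\Fc)(-1)$ injects into the $*$-Artinian module $H^{i+1}_\M(\R(\Fc))$, so it is itself $*$-Artinian, and therefore so is $H^i_\M(L^\Fc)$. The ``in particular'' statement follows immediately from \ref{Artin-vanish}(a). The only potentially delicate point is that $L^\Fc$ is not a finitely generated $\R(I)$-module, so one cannot quote the finitely generated case directly; the depth bound on the ambient module $A[t]$ is the substitute, and it is immediate from $A[t]$ being $A$-free.
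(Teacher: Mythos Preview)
Your proof is correct and follows essentially the same approach as the paper: take local cohomology of the defining short exact sequence, use that an $A$-regular sequence in $\m$ is an $A[t]$-regular sequence in $\M_0$ to kill $H^i_\M(A[t])$ for $i<c$, and conclude that $H^i_\M(L^\Fc)(-1)$ embeds in the $*$-Artinian module $H^{i+1}_\M(\R(\Fc))$. You have simply spelled out in more detail the points the paper leaves implicit (finite generation of $\R(\Fc)$ over $\R(I)$, and the invocation of \ref{Artin-vanish}(a)).
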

\begin{proof}
We take local cohomology of short exact sequence (\ref{defn}) in 3.1  \wrt \ $\M$. We note that if $a_1,\ldots, a_c$ is an $A$-regular sequence then $a_1t^0,\ldots, a_ct^0 \in \M_0$ is an $A[t]$-regular sequence. This yields that $H^i_\M(L^\Fc(-1))$ to be  a $\R(I)$-sub-module of $H^{i+1}_\M(\R(\Fc))$ for $i = 0,\ldots, c-1$. The result follows.
\end{proof}
As an easy consequence we get the following
\begin{corollary}\label{zero}
Assume $A$ is analytically unramified. Set $L^*_I = \bigoplus_{n \geq 0} A/(I^{n+1})^*$. Then $H^0(L^*_I) = 0$.
\end{corollary}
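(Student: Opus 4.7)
The plan is to apply Proposition \ref{tech}(2) directly to the integral closure filtration $\Gc = \{(I^n)^*\}_{n \geq 0}$. Since $A$ is analytically unramified, we have already noted in \ref{setup} that $\Gc$ is a multiplicative $I$-stable filtration, so Proposition \ref{tech} applies to it. By construction, $L^*_I = \bigoplus_{n \geq 0} A/(I^{n+1})^*$ is nothing but $L^\Gc$.

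Next I would read off what Proposition \ref{tech}(2) tells us in this specific case. With $\Fc$ taken to be $\Gc$ we have $I_{n+1} = (I^{n+1})^*$, and the proposition identifies $H^0_\M(L^\Gc)_n$ with $I^\circ_{n+1}/(I^{n+1})^*$ for a certain ideal $I^\circ_{n+1}$ satisfying $I^\circ_{n+1} \subseteq (I_{n+1})^* = ((I^{n+1})^*)^*$. Now the key observation is the idempotency of integral closure: $((I^{n+1})^*)^* = (I^{n+1})^*$. Therefore $I^\circ_{n+1} \subseteq (I^{n+1})^*$, so the quotient is zero for every $n$, giving $H^0_\M(L^*_I)_n = 0$ for all $n \geq 0$ and hence $H^0_\M(L^*_I) = 0$.

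There is no real obstacle here; the corollary is essentially a one-line consequence of Proposition \ref{tech}(2), with all the work having been done in establishing that proposition. The content of the statement is that while the general filtration $\Fc$ gives $L^\Fc$ a possibly nonzero $0$-th local cohomology measuring the gap between $I_{n+1}$ and its integral closure, this gap vanishes tautologically for the integral closure filtration itself, so that $L^*_I$ is $\M$-torsion-free in a strong graded sense. This is exactly the property that will be exploited in the proof of Theorem \ref{main}.
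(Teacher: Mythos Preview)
Your argument is circular. In the paper, Corollary \ref{zero} is proved \emph{before} Proposition \ref{tech}, and the proof of Proposition \ref{tech}(2) explicitly invokes Corollary \ref{zero}: the paper uses the short exact sequence $0 \to W(1) \to L^\Fc \to L^*_I \to 0$, takes local cohomology, and then needs $H^0_\M(L^*_I)=0$ to conclude that $H^0_\M(L^\Fc)$ coincides with $H^0_\M(W(1)) \subseteq W(1)$, which is what yields the inclusion $I^\circ_{n+1}\subseteq (I_{n+1})^*$. So you cannot derive Corollary \ref{zero} from Proposition \ref{tech}(2) without assuming what you want to prove.

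The paper's own proof avoids this by arguing directly. Since $A$ is reduced one has $\depth A>0$, so Lemma \ref{a-vanish} gives that $H^0_\M(L^*_I)$ is $*$-Artinian. Because integrally closed ideals are Ratliff-Rush, $G^*(I)$ has positive depth and hence $H^0_\M(G^*(I))=0$. Feeding this into the exact sequence $0 \to G^*(I) \to L^*_I \to L^*_I(-1) \to 0$ gives an injection $H^0_\M(L^*_I)\hookrightarrow H^0_\M(L^*_I)(-1)$, and then \ref{Artin-vanish}(c) forces $H^0_\M(L^*_I)=0$. This is the argument you should give; Proposition \ref{tech}(2) is then a consequence, not an input.
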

\begin{proof}
We note that $\depth A > 0$. So by \ref{a-vanish} we get that $H^0_\M(L^\Fc)$ is $8$-Artinian. Also note that $(I^n)^*$ is Ratliff-Rush for all $n \geq 1$. In particular we have that $G^*(I)$ has positive depth, in particular $H^0(G^*(I)) = 0$.

We have a short exact sequence $0 \rt G^*(I) \rt  L^*_I \rt L^*_I(-1) \rt 0$. Taking local cohomology \wrt \ $\M$ we get an inclusion $H^0_\M(L^*_I) \hookrightarrow 
H^0_\M(L^*_I)(-1)$. The result follows from \ref{Artin-vanish}.
\end{proof}

We now give
\begin{proof}[Proof of Proposition \ref{tech}]
(1) This follows from \ref{a-vanish}.

(2)
As $A$ is analytically unramified the module $\R^*(I)/\R(\Fc) = \bigoplus_{n \geq 1} (I_n)^*/I_n$ is a finintely generated
$\R(I)$-module. We have a short exact sequence  of $\R(I)$-modules
$$0 \rt W(1) \rt L^\Fc \rt L^*_I \rt 0. $$
Taking local cohomology \wrt  \ $\M$ we get
\[
 0 \rt H^0_\M(W(1)) \rt H^0_\M(L^\Fc) \rt H^0_\M(L^*_I)
\]
But $H^0_\M((L^*_I) = 0$ by \ref{zero}. So we have
\[
 \frac{I_{n+1}^\circ}{I_{n+1}} = H^0_\M(L^\Fc)_n = H^0_\M(W(1))_n \subseteq W(1)_n  = \frac{(I^{n+1})^*}{I^{n+1}}.
\]
The result follows.

(3) (a) This follows from \ref{a-vanish}.

3(b) Let $x$ be both $\Fc$ and $I$-superficial. Then note we have a short exact sequence for all $n \geq 0$
\[
 0 \rt \frac{(I_{n+1} \colon x)}{I_n} \rt \frac{A}{I_n} \xrightarrow{\alpha_n} \frac{A}{I_{n+1}} \rt \frac{A}{(I_{n+1}, x)} \rt 0,
\]
where $\alpha_n( a+ I_n) = xa + I_{n+1}$.
Set $B = A/(x)$. Let $\ov{\Fc}$ be the quotient filtration of $\Fc$. Thus we have a short exact sequence of $\R(I)$-modules
\[
 0 \rt W \rt L^\Fc(-1) \xrightarrow{xt} L^\Fc \rt L^{\ov{\Fc}} \rt 0.
\]
As $x$ is $\Fc$-superficial we get that $W$ has finite length. So we have a short exact sequence
\begin{equation*}
  H^0_\M(L^{\ov{\Fc}})_n \rt H^1_\M(L^\Fc)_{n-1} \rt H^1_\M(L^\Fc)_n  \tag{$\dagger$}
\end{equation*}

Say $H^1_\M(L^\Fc)_n = 0$ for all $n \geq s$.
We note that $H^0_\M( L^{\ov{\Fc}})_n$
has finite length for all $n \geq 0$ and is zero for $n < 0$. Evaluating $(\dagger)$ at $n = s$ we get that 
$H^1_\M(L^\Fc)_{s-1}$
has finite length. Evaluating $(\dagger)$ at $n = s-1$ yields an exact sequence
\begin{equation*}
  H^0_\M(L^{\ov{\Fc}})_{s-1} \rt H^1_\M(L^\Fc)_{s-2} \rt H^1_\M(L^\Fc)_{s-1}
\end{equation*}
It follows that $H^1_\M(L^\Fc)_{s-2}$ has finite length. Iterating we get that $H^1_\M(L^\Fc)_n$ has finite length for all 
$n \leq s$.

3(c) This follows from $(\dagger)$ as $H^0_\M( L^{\ov{\Fc}})_n$ is zero for $n < 0$. 
\end{proof}

\section{Proof of Theorem \ref{main}}
In this section we give proof of Theorem \ref{main}. We also prove an additional result which will be useful in the proof
of Theorem \ref{main-app}.

\begin{proof}[Proof of Theorem \ref{main}]
 Let $\Fc = \{ (I^n)^* \}$ be the integral closure filtration of $I$. We note that for $r \geq 1$,
 $\Fc^{<r>}$ is the integral closure of filtration of $I^r$. Observe that
 \[
  \left(L^\Fc(-1)\right)^{<r>} = L^{\Fc^{<r>}}(-1)
 \]
 Also note that $\M^{<r>}$ is the maximal homogenous ideal of $\R(I^r)$.
As local cohomology commutes with the Veronese functor and as $H^1_\M(L^\Fc)_j = 0$ for $j \gg 0$ it follows that there exists
$r_0$ such that for $r \geq r_0$ we have $H^1_{\M^{<r>}}(L^{\Fc^{<r>}}(-1))_j = 0$ for all $j \geq 1$.

Fix $r \geq r_0$. Set $K = I^r$. We do the construction as in \ref{induct}. So we may assume that there exists $y \in K$ which is
superficial for both the $K$-adic filtration and the integral closure filtration of $K$. Furthermore $A/(y)$ is analytically 
unramified. Let $\Gc = \{ (K^n)^*\}$ be the integral closure filtration of $K$ and let $\ov{\Gc}$ be it's quotient filtration in 
$A/(y)$. Note we have an short exact sequence of $\R(K)$-modules
\[
 0 \rt L^\Gc(-1) \xrightarrow{yt} L^\Gc \rt L^{\ov{\Gc}} \rt 0.
\]
This induces a long exact sequence in cohomology. Note $H^0(L^\Gc) = 0$. Furthermore by construction $H^1(L^\Gc)_j = 0$ for 
$j \geq 0$. 
So for all $n \in \Z$ we have an exact sequence
\[
 0 \rt H^0(L^{\ov{\Gc}})_n \rt H^1(L^\Gc)_{n-1} 
\]
It follows that $H^0(L^{\ov{\Gc}})_n = 0$ for $n \geq 1$.
We note that as $A/(y)$ is analytically unramified we have by \ref{tech}(2)
\[
 H^0(L^{\ov{\Gc}})_0 = U/V, \quad \text{where} \ V = \ov{K^*} \ \text{and} \ U \subseteq  (\ov{K^*})^*, \ \text{ by \ref{tech}(2)}.
\]
But by our construction $\ov{K^*}$ is integrally closed in $A/(y).$ So $H^0(L^{\ov{\Gc}}) = 0$.
We also have a short exact sequence
\[
 0 \rt G(\ov{\Gc}) \rt L^{\ov{\Gc}} \rt L^{\ov{\Gc}}(-1) \rt 0.
\]
Taking cohomology we get $\depth G(\ov{\Gc}) \geq 1$. By Sally descent, 
\cite[2.2]{HM} we get $\depth G(\Gc) \geq 2$. The result follows.
\end{proof}

The following result is needed in the proof of Theorem \ref{main-app}.
\begin{lemma}\label{sun}
Let $(A,\m)$ be an analytically unramified formally equidimensional Noetherian local ring with $\depth A \geq 2$.
Let $I$ be an $\m$-primary ideal.
 Set $\Fc = \{ (I^n)^* \}_{n \geq 0}$ to be the integral closure filtration of $I$.
 Then $H^1(L^\Fc)$ has finite length.
\end{lemma}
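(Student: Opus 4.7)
The strategy is to use Theorem \ref{main} to pass to a Veronese filtration on which $H^1_\M$ vanishes outright, then descend that vanishing. Concretely, the plan is to show $H^1_\M(L^\Fc)_{-1}=0$; Proposition \ref{tech}(3)(c) will then propagate this to all $n\leq -1$, and combined with parts (a) and (b) of that proposition this yields finite length.

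First I would fix $s$ with $H^1_\M(L^\Fc)_n=0$ for $n\geq s$ (Proposition \ref{tech}(3)(a)), and use Theorem \ref{main} to pick $r\geq s+1$ satisfying $\depth G^*(I^r)\geq 2$. Setting $\Gc=\Fc^{<r>}$, the Veronese identification $(L^\Fc(-1))^{<r>}=L^\Gc(-1)$ from the proof of Theorem \ref{main}, together with the commutation of local cohomology with the Veronese functor, gives $H^1_{\M^{<r>}}(L^\Gc)_n=H^1_\M(L^\Fc)_{rn+r-1}$; in particular this is zero for every $n\geq 0$.

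Next, applying Lemma \ref{induct} to $K=I^r$ would produce a faithfully flat extension $A\to A'$ and a superficial element $y\in I^rA'$ with $B=A'/(y)$ analytically unramified and $\ov{(I^rA')^*}$ integrally closed in $B$. Writing $\Gc'$ for the integral closure filtration of $I^rA'$ and $\ov{\Gc'}$ for its quotient filtration in $B$, the argument in the proof of Theorem \ref{main} gives $H^0_{\M'}(L^{\ov{\Gc'}})=0$ (degrees $n\geq 1$ use the Veronese vanishing above via $0\to H^0(L^{\ov{\Gc'}})_n\to H^1(L^{\Gc'})_{n-1}$, and degree $0$ uses Proposition \ref{tech}(2) together with integral closedness of $\ov{(I^rA')^*}$). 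From the four-term sequence $0\to W\to L^{\Gc'}(-1)\xrightarrow{yt} L^{\Gc'}\to L^{\ov{\Gc'}}\to 0$ with $W$ of finite length, split into two short exact sequences, taking local cohomology and using $H^0(L^{\Gc'})=H^0(L^{\ov{\Gc'}})=0$ yields $0\to H^1(L^{\Gc'})_{n-1}\to H^1(L^{\Gc'})_n\to H^1(L^{\ov{\Gc'}})_n$ for every $n$. Iterating the injection $H^1(L^{\Gc'})_{n-1}\hookrightarrow H^1(L^{\Gc'})_n$ up into degrees where $H^1(L^{\Gc'})$ already vanishes by Proposition \ref{tech}(3)(a) forces $H^1_{\M'}(L^{\Gc'})=0$ outright, and faithful flatness of $A\to A'$ descends this to $H^1_\M(L^\Gc)=0$. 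The Veronese correspondence then gives $H^1_\M(L^\Fc)_{-1}=H^1_{\M^{<r>}}(L^\Gc)_{-1}=0$, and Proposition \ref{tech}(3)(c) iterated downward finishes the proof.

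The main obstacle I anticipate is bookkeeping rather than a new idea: I need to pin down the degree shift in the Veronese correspondence precisely enough that the index $n=-1$ on the $\Gc$-side really does pick out $n=-1$ on the $\Fc$-side, and I must verify that the proof of Theorem \ref{main} really does yield $H^0(L^{\ov{\Gc'}})=0$ in every non-negative degree (not merely in degree $0$) in the present setup. Both of these points are already implicitly handled in the proof of Theorem \ref{main}, so no fresh technical machinery is needed beyond iterating the injection to deduce the total vanishing $H^1_\M(L^\Gc)=0$.
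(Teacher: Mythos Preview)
Your proposal is correct and follows the same overall architecture as the paper: pass to a Veronese $\Gc=\Fc^{<r>}$, show $H^1_\M(L^\Gc)=0$, read off a vanishing degree of $H^1_\M(L^\Fc)$ via the Veronese correspondence, and then invoke Proposition~\ref{tech}(3) to conclude finite length. Your degree bookkeeping $H^1(L^\Gc)_n=H^1(L^\Fc)_{rn+r-1}$ is right, and the descent through faithful flatness is fine because Lemma~\ref{induct}(4) guarantees $\Gc'=\Gc\otimes_A A'$.

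Where you diverge from the paper is in how you obtain $H^1_\M(L^\Gc)=0$. You essentially re-run the machinery from the proof of Theorem~\ref{main}: the flat extension $A\to A'$, the generic superficial element $y$, the vanishing $H^0(L^{\ov{\Gc'}})=0$, and then the injection $H^1(L^{\Gc'})(-1)\hookrightarrow H^1(L^{\Gc'})$ coming from the $y$-sequence. The paper instead uses the \emph{conclusion} of Theorem~\ref{main} directly: since $\depth G(\Gc)\geq 2$, the short exact sequence $0\to G(\Gc)\to L^\Gc\to L^\Gc(-1)\to 0$ immediately gives an injection $H^1(L^\Gc)\hookrightarrow H^1(L^\Gc)(-1)$, and \ref{Artin-vanish}(c) finishes in one line. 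In particular the paper never needs Lemma~\ref{induct}, the extension $A'$, or the quotient filtration at this stage; your citation of Theorem~\ref{main} in step~2 is then actually superfluous in your own argument, since you only use $r\geq s+1$ afterwards. Both routes are valid, but the $G(\Gc)$-sequence is the intended shortcut and is worth internalising.
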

\begin{proof}
 By Theorem \ref{main} there exists $r\geq 1$  such that $G^*(I^r)$ has depth atleast $r$. Let $\Gc$ be the integral closure filtration
 of $I^r$. So we have an exact sequence
 \[
  0 \rt G(\Gc) \rt L^\Gc \rt L^\Gc(-1) \rt 0.
 \]
 Taking cohomology we get an injective map $H^1(L^\Gc) \rt H^1(L^\Gc)(-1)$. By \ref{Artin-vanish} we get
 $H^1(L^\Gc) = 0$.
 
 As observed before 
 \[
  (L^\Fc(-1))^{<r>} = L^\Gc(-1).
 \]
As local cohomology commutes with the Veronese functor we get that \\
$H^1(L^\Fc)_{-r-1} = 0$. The result now follows from \ref{tech}(3).
\end{proof}

\section{Proof of Theorem \ref{main-app}}
Our proof is quite similar in spirit to proof of Theorem 1.6. However it is different in some places. So we are forced to give the whole proof. 
\begin{proof}[Proof of Theorem \ref{main-app}]
We first note that as $A$ is excellent and normal $\widehat{A}$ is also a normal domain of dimension $ d \geq 2$, see \cite[32.2]{Ma}. In particular $A$ is analytically unramified, formally equidimensional with $\depth A \geq 2$.
So all the techniques developed in the earlier sections are applicable.

The Rees algebra of the integral closure filtration of $\m$ is a finite module over $\R(\m)$. In particular there exists an $s_0$ such that for all $j \geq s_0$ 
we have $\m (\m^j)^* = (\m^{j+1})^*$. Choose  $j \geq s_0$.

\textit{Step-1} We we may assume $\lambda(I/(\m^j)^*) = 1$. \\
 By  \cite[2.1]{W} there exists a chain of integrally closed ideals
 \[
  I = I_0 \varsupsetneq I_1 \varsupsetneq I_2 \varsupsetneq \cdots \varsupsetneq I_{s_1} \varsupsetneq I_s = (\m^j)^*,
 \]
 with $\lambda(I_i/I_{i+1}) =1$ for $i = 0,\ldots,s-1$. As $I_j$ are integrally closed  and $A$ is normal they are $\m$-full. So we have 
 \[
  \mu(I) = \mu(I_0) \leq \mu(I_1) \leq \mu(I_2) \leq \cdots \leq \mu(I_{s-1}) \leq \mu((\m^j)^*).
 \]
Thus it suffices to prove $\mu(I_{s-1}) < \mu((\m^j)^*)$. Thus we may assume $\lambda(I/(\m^j)^*) = 1$.

\textit{Step-2} A consequence of assuming $\lambda(I/(\m^j)^*) = 1$. \\
Note $\m I \subseteq (\m^j)^*$.  Also note that 
$$(\m^{j+1})^* = \m (\m^j)^* \subseteq \m I.$$

We have an exact sequence
\[
 0 \rt \frac{(\m^j)^* \cap \m I}{(\m^{j+1})^*}  \rt \frac{(\m^j)^*}{(\m^{j+1})^*}  \rt \frac{I}{\m I} \rt \frac{I}{(\m^j)^* + \m I} \rt 0.
\]
We note that $(\m^j)^* + \m I = (\m^j)^*$ and  $(\m^j)^* \cap \m I = \m I$. Furthermore $\m (\m^j)^* = (\m^{j+1})^*$. Thus the above exact sequence yields an exact sequence
\[
 0 \rt \frac{\m I}{(\m^{j+1})^*}  \rt \frac{(\m^j)^*}{ \m (\m^{j})^*}  \rt \frac{I}{\m I} \rt \frac{I}{\m^j } \rt 0.
\]
Thus to prove our result it suffices to show $c = \lambda(\m I/(\m^{j+1})^*) \geq 2$.

To show $c \geq 2$ we have to consider some modules over the Rees algebra 
$\R(\m) = A[\m t]$ of $\m$. 

\textit{Step 3:} Some modules over the Rees algebra $\R(\m)$. \\
We first list some finitely generated $\R(\m)$-modules that are pertinent to our result.
\begin{enumerate}[\rm (1)]
\item $\R^* = \bigoplus_{i \geq 0}(\m^i)^*t^i$ is a finitely generated $\R(\m)$-module.
 \item  $\R^*_{\geq j} = \bigoplus_{i \geq j} (\m^i)^* t^i$ is an $\R(\m)$-submodule of $\R^*$. \\
 So $\R^*_{\geq j}(+ j) = \bigoplus_{i \geq 0} (\m^{j+i})^*t^i$ is a finitely generated $\R(\m)$-module.
 \item The filtration 
 $$I \supseteq \m I \supseteq \m^2 I \supseteq \cdots \supseteq \m ^i I \supseteq \m^{i+1} I \supseteq \cdots$$
 is $\m$-stable (note that it is not multiplicative). So $E = \bigoplus_{i \geq 0}\m ^i It^i$ is a finitely generated $\R(\m)$-module.
 \item
By our assumption on $j$ we get that   $\R^*_{\geq j}(+ j) \subseteq E$. So $C = E/\R^*_{\geq j}(+ j)$ is a fintely generated $\R$-module.
 Note
 \begin{align*}
  C &= \frac{I}{(\m^j)^*} \oplus \frac{\m I}{(\m^{j+1})^*} t \oplus \frac{\m^2 I}{(\m^{j+2})^*} t^2 \oplus \cdots 
  \frac{\m^i I}{(\m^{j+i})^*}t^i \oplus \cdots, \\
  &= \frac{I}{(\m^j)^*} \oplus \frac{\m I}{\m (\m^{j})^*} t \oplus \frac{\m^2 I}{\m^2(\m^{j})^*} t^2 \oplus \cdots 
  \frac{\m^i I}{\m^i(\m^{j})^*}t^i \oplus \cdots.
 \end{align*}
\end{enumerate}
We now list two NOT finitely generated modules over $\R(\m)$ which is of interest to us:

\begin{enumerate}[\rm (1)]
 \item $L = \bigoplus_{i \geq 0} A/(\m^{i+1})^*$. By \ref{basic} we get that $L$
 is a $\R(\m)$-module.
\item
We also have an exact sequence
\[
 0 \rt E \rt A[t] \rt V \rt 0,
\]
where $V =  \bigoplus_{i \geq 0} A/\m^{i}I$. It follows that $V$ is a $\R$-module.
\end{enumerate}

\textit{Step 4:} Some local cohomology computations. \\
Throughout we compute local cohomology \wrt \ $\M$, the maximal homogeneous ideal of $\R(\m)$.
As $\depth A \geq 2$ there exists  $x, y \in \m$ such that $x, y$ is an $A$-regular sequence. We note that $xt^0, yt^0 \in \M_0$
is an $A[t]$-regular sequence. So $H^0(A[t]) = H^1(A[t]) = 0$. It follows that $H^0(V) = H^1(E)$.

(1) \textit{Claim-1:} $H^0(V)$ has finite length as an $A$-module. 

It is well-known that  for all $l \geq 0$ we have $H^l(E)_n = 0 $ for $n \gg 0$.
As $H^0(V) = H^1(E)$  we get that $H^0(V)_n = 0$ for $n \gg 0$.

Also as $H^0(V) \subseteq V $ we have $H^0(V)_n = 0$ for $n < 0$. Furthermore
as $H^0(V)_n \subseteq V_n$ it follows that $H^0(V)_n$ has finite length for all $n$. So Claim-1 follows.

(2) By \ref{zero} we get $H^0(L) = 0$. By  \ref{sun}
we get that $ H^1(L)$ have finite length. Set $L(-1)_{\geq j} = \bigoplus_{i \geq j} A/\m^i t^i$.
We have an exact sequence
\[
 0 \rt L(-1)_{\geq j} \rt L(-1) \rt W \rt 0,
\]
where $W$ has finite length. It follows that $H^0(L(-1)_{ \geq j}) = 0$
and  $H^1(L(-1)_{ \geq j})$
 has finite length.
\end{proof}

\textit{Step 5:} $c = \lambda(\m I /(\m^{j+1})^*) > 1$.

Set $D = (L(-1)_{\geq j})(+ n) = \bigoplus_{i \geq 0} A/(\m^{i+j})^*$. 
Note we have a short exact sequence of $\R(\m)$-modules
\[
 0 \rt C \rt D \rt V \rt 0.
\]
Let $x$ be $A$-superficial \wrt \ $\m$. Then  it is also superficial \wrt \ to the
integral closure filtration of $\m$; see \cite[section 2.5]{C}. Notice that $((\m^{n+1})^* \colon x) = (\m^n)^*$ for all $n \geq 1$.

Set $u = xt \in \R_1$. Let $ f^D_u \colon  D(-1) \rt  D$  be multiplication by $u$. Notice $\ker f^D_u = 0$.  It follows that $ \ker(f^C_u) = 0$.

Notice $ \ker u$ is $D$-regular. So $u$ is $C$-regular.

\textit{Suppose if possible $c = 1$}. We note that $C_0 = I/(\m^j)^*$ and $C_1 = \m I /\m (\m^{j })^*$.  Note As $\ker(f^C_u)= 0$
we get the map $C_0 \xrightarrow{u} C_1$ is an isomorphism. So we get
$\m I = x I + \m(\m^{j})^*$. It follows that for all $i \geq 1$ we have
\[
 \m^{i+1}I = x \m^i I +\m^i (\m^{j})^*.
\]
Thus we have that
\[
 0 \rt  \rt C(-1) \xrightarrow{u} C \rt I/\m^j \rt 0.
\]
(here  $I/\m^j$ is concentrated in degree zero). It follows that $\dim_\R C = 1$.

As $\dim_\R C = 1$ we get that $H^1(C)$ is NOT finitely generated as an $A$-module.  However the exact sequence
$0 \rt C \rt D \rt V \rt 0$ yields an exact sequence $H^0(V) \rt H^1(C) \rt H^1(D)$ which implies that $H^1(C)$ has finite 
length.

Thus our assumption $c = 1$ is not possible. So $c \geq 2$ and this proves our result.

\end{document}